\newtheorem{theorem}{Theorem}
\newtheorem{lemma}[theorem]{Lemma}
\newtheorem{remark}[theorem]{Remark}
\begin{document}
%
\title{Towards a minimal order distributed observer for linear systems}
%
%
%


\author{Weixin~Han,
        Harry~L.~Trentelman,
        ~Zhenhua~Wang,
        and~Yi~Shen
\thanks{This work was partially supported by China Scholarship Council and National Natural Science Foundation of China (Grant No. 61273162, 61403104).}
\thanks{Weixin Han, Zhenhua Wang and Yi Shen are with the Department of Control Science and Engineering, Harbin Institute of Technology, Harbin,
150001 P. R. China.
        {\tt\small zhenhua.wang@hit.edu.cn}}%
\thanks{Harry L. Trentelman is with the Johann Bernoulli Institute for Mathematics and Computer Science, University of Groningen,
9700 AK Groningen The Netherlands.
        {\tt\small h.l.trentelman@rug.nl}}%
}

%
%

\markboth{Journal of \LaTeX\ Class Files,~Vol.~XX, No.~X, August~2017}%
{Shell \MakeLowercase{\textit{et al.}}: Bare Demo of IEEEtran.cls for IEEE Journals}
%



\maketitle


\begin{abstract}
In this paper we consider the distributed estimation problem for continuous-time linear time-invariant (LTI) systems. A single linear plant is observed by a network of local observers. Each local observer in the network has access to only part of the output of the observed system, but can also receive information on the state estimates of its neigbours. Each local observer should in this way generate an estimate of the plant state. In this paper we study the problem of existence of a reduced order distributed observer. We show that if the observed system is observable and the network graph is a strongly connected directed graph, then a distributed observer exists with state space dimension equal to $Nn - \sum_{i =1}^N p_i$, where $N$ is the number of network nodes, $n$ is the state space dimension of the observed plant, and $p_i$ is the rank of the output matrix of the observed output received by the $i$th local observer. In the case of a single observer, this result specializes to the well-known minimal order observer in classical observer design.
\end{abstract}

\begin{IEEEkeywords}
Distributed estimation, linear system observers, minimal order, LMI's, sensor networks.
\end{IEEEkeywords}

%
\IEEEpeerreviewmaketitle

\section{Introduction}
%
%
%
%

Recently, there has been much interest in the problem of designing distributed observers for estimation of the state of a given linear time invariant plant. Whereas the classical observer problem is to find a single observer that receives the entire measured plant output in order to generate this state estimate, in the distributed version the aim is to find a given number of local observers that
can communicate according to an a priori given network graph. Each of the local observers in the network receives only part of the plant output, but also information on the state estimates of its neigbours. Each local observer should in this way generate an estimate of the plant state. Thus, the problem of finding a distributed observer can be interpreted as the problem of finding a single observer that consists of a given number of local observers, interconnected by means of an a priori given network graph. Since each of the local observers receives only part of the plant output, properties like observability or detectability that might hold for the original plant output do no longer hold for the partial output, and hence classical observer design is not applicable for the local observer.

Among the many contributions on the distributed observer problem we mention  \cite{Kim2016CDC}, \cite{Mitra2016CCC} and \cite{Park2012ACC}. 
In particular, in \cite{Park2012ACC,Park2012CDC,Park2017TAC} a state augmented observer was constructed to cast the distributed estimation problem as a problem of decentralized stabilization, using the notion of fixed modes \cite{Anderson1981AUT}. These references only discuss discrete-time systems. More recently, in \cite{Wang2017TAC}, the idea of putting the distributed observer problem in the context of decentralized control was applied to continuous time plants. In \cite{Mitra2016CCC,Mitra2016CDC,Mitra2017} local Luenberger observers at each node were constructed, based on applying the Kalman observable decomposition. There, the observer reconstructs a certain portion of the state solely by using its own measurements, and uses consensus dynamics to estimate the unobservable portions of the state at each node. Specifically, in \cite{Kim2016CDC} two observer gains were designed to achieve distributed state estimation, one for local measurements and the other for the information exchange. In \cite{Han2017}, a simple LMI based approach was proposed for the design of distributed observers. 

A standard result in classical observer design states that if the plant is observable, then an observer with arbitrary fast error convergence exists of order equal to the order of the plant, say $n$,  minus the rank of the output matrix, say $p$, \cite{Luenberger1971TAC}. It was argued in \cite{Wonham} that indeed $n - p$ is the minimal order for state observers. Of course, similarly one can address the issue of existence of a reduced, or even minimal, order distributed observer. This issue will be the topic of the present paper. Assume that our plant is a continuous-time LTI system
\begin{equation} \label{sys0}
\begin{array}{l}
\dot{x}=Ax\\
y=Cx
\end{array}
\end{equation}
where $x\in \mathbb{R}^n$ is the state and $y\in \mathbb{R}^m$ is the measurement output. We partition the output $y$ as 
\[
y = {\footnotesize \begin{bmatrix} y_1 \\ y_2 \\ \vdots \\ y_N \end{bmatrix} }
\]
where $y_i\in \mathbb{R}^{m_i}$ and $\sum_{i=1}^Nm_i=m$. Accordingly, we partition the output matrix as 
\[
C = {\footnotesize \begin{bmatrix} C_1 \\ C_2 \\ \vdots \\ C_N \end{bmatrix} }
\]
with $C_i\in \mathbb{R}^{m_i\times n}$. In addition, a directed graph with $N$ nodes is given. Each node in the graph will carry a local observer. The local observer at node $i$ has only access to the measurement $y_i=C_ix$ and to the state estimates of its neighbours, including itself. In this paper, a standing assumption will be that the communication graph is strongly connected. We will also assume that the pair $(C,A)$ is observable. For the discrete time case, it was shown in \cite{Park2017TAC} that a distributed observer of order $Nn + N -1$ exists. This bound was re-established in  \cite{Wang2017TAC} for continuous time plants. Again for the discrete time case, in \cite{Mitra2017} 
it was shown that a distributed observer exists of order $Nn$. Also in  \cite{Kim2016CDC}, under certain assumptions, a dynamic order $Nn$ was shown to be sufficient. More recently, in our paper \cite{Han2017} we reconfirmed that for the continuous time case a dynamic order $Nn$ suffices. 

In the present paper we will improve all sufficient dynamic orders established up to now and as our main result  show that, for any desired errror convergence rate, a distributed observer exists of dynamic order equal to $Nn - \sum_{i=1}^N p_i$, where  $p_i$ is the rank of the local output matrix $C_i$. This result extends in a natural way the minimal order $n -p$ for a single, non-distributed observer, with $p$ the rank of the output matrix $C$.

\begin{figure}
  \centering
  \includegraphics[scale=0.45]{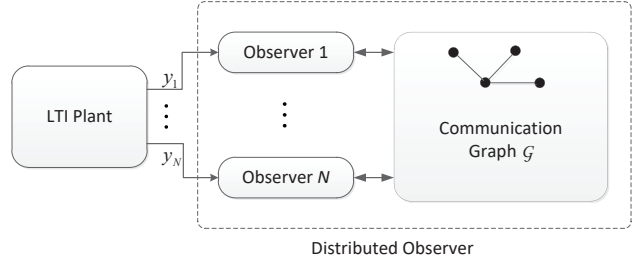}
  \caption{Framework for distributed state estimation}
  \label{dobs}
\end{figure}

%
%

\section{Preliminaries and Problem Formulation}
\subsection{Preliminaries}
\noindent \emph{Notation}:
 The rank of a given matrix $M$ is denoted by $\mathrm{rank}~M$. If $M$ has full column rank $m$ then $M^{\dag}=(M^TM)^{-1}M^T$ denotes its Moore-Penrose inverse, so $M^{\dag}M=I_m$. The identity matrix of dimension $N$ will be denoted by $I_N$. The vector $\mathbf{1}_N$ denotes the $N$-dimensional column vector comprising of all ones. 
For a symmetric matrix $P$, $P>0$ $(P<0)$ means that $P$ is positive (negative) definite. For a set $\{A_1,A_2,\cdots,A_N\}$ of matrices, we use $\mathrm{diag}\{A_1,A_2,\cdots,A_N\}$ to denote the block diagonal matrix with the $A_i$'s along the diagonal, and the matrix $\begin{bmatrix}
A_1^T &A_2^T&\cdots &A_N^T
\end{bmatrix}^T $ is denoted by $\mathrm{col}(A_1,A_2,\cdots,A_N)$. The Kronecker product of the matrices $M_1$ and $M_2$ is denoted by $M_1\otimes M_2$. In this paper, $\mathbb{R}^n$ will denote the $n$-dimensional Euclidean space. For a $p \times n$ matrix $A$, $\mathrm{ker}~A:=\{x\in \mathbb{R}^n \mid Ax=0\}$ and $\mathrm{im}~A:= \{ Ax \mid x \in \mathbb{R}^n\}$ will denote the kernel and image of $A$, respectively. If $\mathcal{V}$ is a subspace of $\mathbb{R}^n$, then $\mathcal{V}^{\perp}$ will denote the orthogonal complement of $\mathcal{V}$ with respect to the standard inner product in $\mathbb{R}^n$.

In this paper, a weighted directed graph is denoted by $\mathcal{G=(N,E,A)}$, where $\mathcal{N} = \{1,2,\cdots,N\}$ is a finite nonempty set of nodes, $\mathcal{E\subset N\times N}$ is an edge set of ordered pairs of nodes, and $\mathcal{A}=[a_{ij}]\in \mathbb{R}^{N\times N}$ denotes the adjacency matrix. The $(j,i)$-th entry $a_{ji}$ is the weight associated with the edge $(i,j)$. We have $a_{ji}\neq 0$ if and only if $(i,j)\in \mathcal{E}$. Otherwise $a_{ji}=0$. An edge $(i,j)\in \mathcal{E}$ designates that the information flows from node $i$ to node $j$. 
A directed path from node $i_1$ to $i_l$ is a sequence of edges $(i_k, i_{k+1})$, $k=1,2,\cdots,l-1$ in the graph. A directed graph $\mathcal{G}$ is strongly connected if between any pair of distinct nodes $i$ and $j$ in $\mathcal{G}$, there exists a directed path from $i$ to $j$, $i,j\in \mathcal{N}$.

The Laplacian $\mathcal{L}=[l_{ij}]\in \mathbb{R}^{N\times N}$ of $\mathcal{G}$ is defined as $\mathcal{L:=D-A}$, where the $i$-th diagonal entry of the diagonal matrix $\mathcal{D}$ is given by $d_i=\sum_{j=1}^N a_{ij}$. By construction, $\mathcal{L}$ has a zero eigenvalue with a corresponding eigenvector $\mathbf{1}_N$ (i.e., $\mathcal{L}\mathbf{1}_N=0_N$), and if the graph is strongly connected, its algebraic multipicity is equal to one and all the other eigenvalues lie in the open right-half complex plane. 

For strongly connected graphs $\mathcal{G}$, we now review the following lemma.
\begin{lemma}\cite{Olfati2004TAC,Ren2005TAC,Yu2010E3T} \label{mirror}
Assume $\mathcal{G}$ is a strongly connected directed graph. Then there exists a unique positive row vector $r=\begin{bmatrix}
r_1,\cdots,r_N
\end{bmatrix} $ such that $r\mathcal{L}=0$ and $r\mathbf{1}_N=N$. Define $R:=\mathrm{diag}\{r_1,\cdots,r_N\}$. Then $\hat{\mathcal{L}}:=R\mathcal{L+L}^TR$ is positive semi-definite, $\mathbf{1}_N^T\hat{\mathcal{L}}=0$ and $\hat{\mathcal{L}}\mathbf{1}_N=0$.
\end{lemma}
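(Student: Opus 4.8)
The lemma contains three assertions: existence and uniqueness of the normalized positive left null vector $r$ of $\mathcal{L}$; the identities $\hat{\mathcal{L}}\mathbf{1}_N=0$ and $\mathbf{1}_N^{T}\hat{\mathcal{L}}=0$; and positive semidefiniteness of $\hat{\mathcal{L}}$. The plan is to dispatch them in that order, the first being the only one that requires real work.

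For the existence and uniqueness of $r$, I would apply Perron--Frobenius theory to $-\mathcal{L}^{T}$. Its off-diagonal entries are $(-\mathcal{L}^{T})_{ij}=a_{ji}\ge 0$ (equivalently, $cI-\mathcal{L}^{T}$ is entrywise nonnegative once $c$ is large enough), so $-\mathcal{L}^{T}$ is essentially nonnegative, and it is irreducible exactly because $\mathcal{G}$ is strongly connected. Since $\mathcal{L}\mathbf{1}_N=0$, the scalar $0$ is an eigenvalue of $\mathcal{L}$ and hence of $-\mathcal{L}^{T}$, while diagonal dominance (Gershgorin) places every eigenvalue of $\mathcal{L}$ in the closed right half-plane, so every eigenvalue of $-\mathcal{L}^{T}$ has nonpositive real part. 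Thus $0$ is precisely the eigenvalue of the irreducible essentially nonnegative matrix $-\mathcal{L}^{T}$ with largest real part, and by Perron--Frobenius it is algebraically simple with a componentwise positive eigenvector $r^{T}$, unique up to positive scaling. Equivalently $r\mathcal{L}=0$ with all $r_i>0$; since $r\mathbf{1}_N=\sum_i r_i>0$, a unique scaling achieves $r\mathbf{1}_N=N$, and $R=\mathrm{diag}\{r_1,\dots,r_N\}$ is positive definite.

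The two identities are then immediate. Because $R\mathbf{1}_N=r^{T}$, we get $\hat{\mathcal{L}}\mathbf{1}_N=R\mathcal{L}\mathbf{1}_N+\mathcal{L}^{T}R\mathbf{1}_N=0+\mathcal{L}^{T}r^{T}=0$, and $\mathbf{1}_N^{T}\hat{\mathcal{L}}=(\hat{\mathcal{L}}\mathbf{1}_N)^{T}=0$ since $\hat{\mathcal{L}}$ is symmetric.

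For positive semidefiniteness --- the one substantive point --- the idea is to recognize $\hat{\mathcal{L}}$ as itself the Laplacian of an undirected weighted graph. It is symmetric, its $(i,j)$ entry for $i\ne j$ equals $r_i\mathcal{L}_{ij}+r_j\mathcal{L}_{ji}=-(r_ia_{ij}+r_ja_{ji})\le 0$, and its row sums vanish by the identity just established; hence, setting $\hat a_{ij}:=r_ia_{ij}+r_ja_{ji}=\hat a_{ji}\ge 0$, the standard quadratic-form identity for graph Laplacians gives $x^{T}\hat{\mathcal{L}}x=\sum_{i<j}\hat a_{ij}(x_i-x_j)^{2}\ge 0$ for all $x\in\mathbb{R}^{N}$, i.e. $\hat{\mathcal{L}}\ge 0$. (Connectedness of this ``mirror'' graph, which follows from strong connectivity of $\mathcal{G}$, additionally gives $\ker\hat{\mathcal{L}}=\mathrm{span}(\mathbf{1}_N)$ --- a refinement needed later for the main result.) The only genuine obstacle here is the first step: correctly invoking the Perron--Frobenius theorem for irreducible essentially nonnegative matrices to guarantee a strictly positive left null vector of $\mathcal{L}$ that is unique up to scaling; everything afterwards is bookkeeping.
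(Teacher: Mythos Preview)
Your argument is correct in all three parts: the Perron--Frobenius step for the positive left null vector, the two null-space identities, and the recognition of $\hat{\mathcal{L}}$ as an undirected Laplacian to obtain $\hat{\mathcal{L}}\ge 0$. Note, however, that the paper does not supply its own proof of this lemma; it is quoted as a known result from \cite{Olfati2004TAC,Ren2005TAC,Yu2010E3T}, so there is nothing in the paper to compare your route against. What you have written is essentially the standard proof found in those references (and in the remark following the lemma, the paper already signals the ``mirror graph'' interpretation you use for positive semidefiniteness), so your approach is fully aligned with the intended background.
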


We note that $R\mathcal{L}$ is the Laplacian of the balanced directed graph obtained by adjusting the weights in the original graph. The matrix $\hat{\mathcal{L}}$ is the Laplacian of the undirected graph obtained by taking the union of the edges and their reversed edges in this balanced digraph. This undirected graph is called the mirror of this balanced graph \cite{Olfati2004TAC}.
%
%

\subsection{Problem formulation and main result}
Consider the continuous-time LTI system \eqref{sys0}, where $x\in \mathbb{R}^n$ is the state and $y\in \mathbb{R}^m$ is the measurement output. As explained in the introduction we partition the output $y$ as $y=\mathrm{col}(y_1,\cdots,y_N)$, where $y_i\in \mathbb{R}^{m_i}$ and $\sum_{i=1}^Nm_i=m$. Accordingly, $C=\mathrm{col}(C_1,\cdots,C_N)$ with $C_i\in \mathbb{R}^{m_i\times n}$. Here, the portion $y_i=C_ix$ is assumed to be the only output information that can be acquired by node $i$ in the given network graph $\mathcal{G}$. The rank of the local output matrix $C_i$ will be denoted by  $p_i$.

In this paper, a standing assumption will be that the communication graph $\mathcal{G}$ is a strongly connected directed graph. We will also assume that the pair $(C,A)$ is observable. However, $(C_i,A)$ is not assumed to be observable or detectable.

We will design a  distributed observer for the system \eqref{sys0} with the given communication network  $\mathcal{G}$. The distributed observer will consist of $N$ local observers, and the local observer at node $i$ will have dynamics of the following form:
\begin{equation} \label{obsl}
\begin{array}{l}
\dot{z}_i=N_iz_i+L_iy_i+\gamma r_iM_i\sum_{j = 1}^N a_{ij}(\hat{x}_j-\hat{x}_i)\\
\hat{x}_i=P_iz_i+Q_iy_i
\end{array}
\end{equation}
where $i \in \mathcal{N}$, $z_i\in \mathbb{R}^{n-p_i}$ is the state of the local observer, $\hat{x}_i\in \mathbb{R}^n$ is the estimate of plant state at node $i$, $a _{ij}$ is the $(i,j)$-th entry of the adjacency matrix $\mathcal{A}$ of the given network, $r_i$ is defined as in Lemma 1, $\gamma \in \mathbb{R}$ is a coupling gain to be designed, $N_i \in \mathbb{R}^{(n-p_i) \times (n-p_i)}$, $L_i \in \mathbb{R}^{(n-p_i) \times m_i}$, $M_i \in \mathbb{R}^{(n-p_i)\times n}$, $P_i \in \mathbb{R}^{n \times (n - p_i)}$ and $Q_i \in \mathbb{R}^{n \times m_i}$ are gain matrices to be designed.
%
The objective of distributed state estimation is to design a network of local observers \eqref{obsl} that cooperatively estimate the state of the plant \eqref{sys0}. In other words, we want to design \eqref{obsl} such that for any choice of initial states on \eqref{sys0} and \eqref{obsl} 
\begin{equation}\label{esg}
\lim_{t\to \infty} \left( \hat{x}_i(t)-x(t) \right) =0
\end{equation}
for all $i\in\mathcal{N}$, i.e., the state estimate maintained by each node converges to the true state of the plant. Following \cite{Park2017TAC}, if the distributed observer \eqref{obsl} achieves \eqref{esg} then it is said to achieve omniscience asymptotically.
The main result of this paper is the following:
\begin{theorem}
Assume that $(C,A)$ is observable and that the network graph $\mathcal{G}$ is a strongly connected directed graph. Let $\alpha \geq 0$ be a desired error convergence rate. Then there exists a distributed observer \eqref{obsl} that achieves omniscience asymptotically and all error trajectories  converge to zero with convergence rate at least $\alpha$. Such distributed observer exists with state space dimension $Nn - \sum_{i =1}^N p_i$, where $p_i := \mathrm{rank}~C_i$.
\end{theorem}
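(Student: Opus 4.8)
The plan is to build each local observer as a classical reduced-order (Luenberger) observer for the pair $(C_i,A)$, augmented with a consensus coupling term, and then to choose the coupling gain $\gamma$ large enough so that the aggregate error dynamics are stable with the prescribed decay rate $\alpha$. The starting point is the construction underlying the minimal-order observer for a single plant: since $\mathrm{rank}~C_i = p_i$, we can pick a full-row-rank left factor and a complementary matrix so that the similarity transformation $T_i = \mathrm{col}(C_i^{\mathrm{rf}}, S_i)$ (with $C_i^{\mathrm{rf}} \in \mathbb{R}^{p_i \times n}$ a full-rank reduction of $C_i$ and $S_i \in \mathbb{R}^{(n-p_i)\times n}$ chosen to make $T_i$ invertible) splits the state of the plant, as seen by node $i$, into a ``measured part'' of dimension $p_i$ and an ``unmeasured part'' of dimension $n-p_i$. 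In the transformed coordinates one reads off natural candidates for $P_i$, $Q_i$ (reconstructing $x$ from $z_i$ and $y_i$ via $T_i^{-1}$), and for $N_i$, $L_i$, $M_i$; the key algebraic identities are $P_i S_i + Q_i C_i = I_n$ (so that $\hat x_i = x$ whenever $z_i = S_i x$), together with the Sylvester-type relation $S_i A = N_i S_i + L_i C_i + (\text{a term involving } M_i)$ that makes $e_i := z_i - S_i x$ satisfy $\dot e_i = N_i e_i + (\text{coupling})$. Here $N_i$ has the form $S_i A T_i^{-1}$ restricted to the unmeasured block minus a correction $K_i$ times the measured block, i.e. the usual reduced-order observer matrix whose eigenvalues are freely assignable \emph{only if} $(C_i,A)$ is observable — which it is not. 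This is precisely why the consensus term is needed.

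Next I would assemble the stacked error. Writing $e = \mathrm{col}(e_1,\dots,e_N)$ and $\tilde x_i = \hat x_i - x = P_i e_i$, the coupling term $\gamma r_i M_i \sum_j a_{ij}(\hat x_j - \hat x_i)$ translates, after using $\hat x_j - \hat x_i = P_j e_j - P_i e_i$, into a term that couples the $e_i$'s through the graph Laplacian $\mathcal{L}$. The aggregate dynamics take the form
\begin{equation} \label{aggdyn}
\dot e = \big( \mathcal{N} - \gamma (R\mathcal{L}\otimes I)\,\mathrm{diag}(M_iP_i) \big) e + (\text{cross terms}),
\end{equation}
where $\mathcal{N} = \mathrm{diag}(N_1,\dots,N_N)$ and $R = \mathrm{diag}(r_1,\dots,r_N)$ is the positive diagonal matrix of Lemma~\ref{mirror}. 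The point of the $r_i$ weights built into \eqref{obsl} is to symmetrize the Laplacian: with the Lyapunov candidate $V = e^T (P\otimes I) e$ for a suitable block-diagonal $P>0$, the coupling contributes a term governed by $\hat{\mathcal{L}} = R\mathcal{L} + \mathcal{L}^T R \geq 0$, which is what we can actually bound. I would then invoke observability of the \emph{full} pair $(C,A)$: although no single $(C_i,A)$ is detectable, collectively the matrices $M_i$ (which encode $C_i$ together with $S_i$) ``see'' enough of the state that the combination of the decoupled terms $N_i$ and the graph-coupled term renders the whole system dissipative. Concretely, one shows there exist $P_i > 0$ and gains $K_i$ (defining $N_i$) solving an LMI of the type $P_i N_i + N_i^T P_i - \gamma(\text{coupling}) + 2\alpha P_i < 0$ aggregated over $i$; feasibility for sufficiently large $\gamma$ follows from observability of $(C,A)$ by a standard argument (the unobservable subspaces of the individual $(C_i,A)$ intersect in $\{0\}$).

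**Main obstacle.** The crux is the joint gain synthesis: one must simultaneously pick the reduced-order observer matrices $N_i$ (hence the $K_i$), the reconstruction maps $P_i,Q_i$, a common-structured Lyapunov matrix $P = \mathrm{diag}(P_1,\dots,P_N)$, and the scalar $\gamma$, so that \eqref{aggdyn} satisfies $V̇ \leq -2\alpha V$. The decoupled part $\mathcal{N}$ is \emph{not} Hurwitz (each $(C_i,A)$ has uncontrollable-from-the-local-output modes), so stability must come entirely from the graph coupling acting on the consensus-like directions, while the $r_i$-weighting is essential to turn $R\mathcal{L}$ into something compatible with a quadratic Lyapunov function. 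The technical heart is therefore a spectral/LMI argument — in the spirit of the LMI approach of \cite{Han2017} but now on the \emph{reduced} state space of dimension $n-p_i$ per node — showing that observability of $(C,A)$ plus strong connectivity of $\mathcal{G}$ guarantees that choosing $\gamma$ large enough makes the aggregated LMI feasible with decay margin $\alpha$. Once feasibility is established, reading off $N_i, L_i, M_i, P_i, Q_i$ from the solution and verifying $\lim_{t\to\infty}(\hat x_i - x) = 0$ with rate $\alpha$ via $\tilde x_i = P_i e_i$ is routine, and the total dimension is $\sum_i (n - p_i) = Nn - \sum_i p_i$ by construction.
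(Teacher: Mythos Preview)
Your proposal is correct and follows essentially the same route as the paper: a reduced-order Luenberger construction at each node, consensus coupling weighted by the left Perron vector $r$, a block-diagonal Lyapunov function, and the key observation that the unobservable subspaces of the pairs $(C_i,A)$ intersect trivially because $(C,A)$ is observable. The paper refines your two-block split into a three-block observability decomposition (measured / observable-but-unmeasured / unobservable), which lets it place a freely designable local gain $H_i$ on the middle block and isolate the LMI feasibility question into a clean matrix inequality (the paper's Lemma~\ref{key}) whose proof is exactly your ``standard argument'' about the trivial intersection of unobservable subspaces---but is in fact the technical heart, not a routine step.
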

In the remainder of this paper we will prove this result by outlining how to design a desired distributed observer. 

\section{Design of the distributed observer}

To design a distributed observer of the form \eqref{obsl}, we make a full rank factorization for each local output matrix $C_i$. Recall that $\mathrm{rank}~C_i =p_i$ and factorize $C_i=D_iF_i$ with $D_i \in \mathbb{R}^{m_i\times p_i}$ full column rank and $F_i \in \mathbb{R}^{p_i\times n}$ full row rank. 
Recall that $D_i^{\dag}=(D_i^TD_i)^{-1}D_i^T$. Since $y_i=C_ix$, we have 
\begin{equation}
\tilde{y}_i:=D_i^{\dag}y_i=F_ix,
\end{equation}
where $\tilde{y}_i \in \mathbb{R}^{p_i}$ represents a virtual local output. 
Denote 
$F = \mathrm{col}(F_1,\cdots,F_N)$. Clearly, $(F,A)$ is observable, but for $i \in \mathcal{N}$, $(F_i,A)$ is not necessarily observable or detectable. 
%
To proceed, we introduce orthogonal transformations that yields observability decompositions for the pairs $(F_i,A)$. For $i \in \mathcal{N}$, let $T_i$ be an orthogonal matrix such that the matrices $A$ and $F_i$ are transformed by the state space transformation $T_i$ into the form 
\begin{equation} \label{trf}
T_i^TAT_i=\begin{bmatrix}
A_{i11} &A_{i12} &0\\
A_{i21} &A_{i22} &0\\
A_{i31} &A_{i32} &A_{iu}\\
\end{bmatrix},~F_iT_i=\begin{bmatrix}
E_i &0 &0
\end{bmatrix}
\end{equation}
where $A_{i11}\in \mathbb{R}^{p_i\times p_i}$, $A_{i12}\in \mathbb{R}^{p_i\times (v_i-p_i)}$, $A_{i21}\in \mathbb{R}^{(v_i-p_i)\times p_i}$, $A_{i22}\in \mathbb{R}^{(v_i-p_i)\times (v_i-p_i)}$, $A_{i31}\in \mathbb{R}^{(n-v_i)\times p_i}$, $A_{i32}\in \mathbb{R}^{(n-v_i)\times (v_i-p_i)}$, $A_{iu}\in \mathbb{R}^{(n-v_i)\times (n-v_i)}$, $E_i\in \mathbb{R}^{p_i\times p_i}$ is a non-singular matrix, and $n-v_i$ is the dimension of the unobservable subspace of the pair $(F_i,A)$.

For convenience, denote 
\begin{equation}
A_{io}=\begin{bmatrix}
A_{i11} &A_{i12}\\
A_{i21} &A_{i22} 
\end{bmatrix}, A_{ir}=\begin{bmatrix}
A_{i31} &A_{i32}
\end{bmatrix}, F_{io}=\begin{bmatrix}
E_i &0
\end{bmatrix},
\end{equation}
where $A_{io}\in \mathbb{R}^{v_i\times v_i}$, $A_{ir}\in \mathbb{R}^{(n-v_i)\times v_i}$, $F_{io}\in \mathbb{R}^{p_i\times v_i}$. Then clearly
\begin{equation} \label{trfc}
T_i^TAT_i=\begin{bmatrix}
A_{io} &0\\
A_{ir} &A_{iu}
\end{bmatrix},~F_iT_i=\begin{bmatrix}
F_{io} &0
\end{bmatrix}.
\end{equation}
By construction, the pair $(F_{io},A_{io})$ is observable. Furthermore, it can be checked using the Hautus test that the pair $(A_{i12},A_{i22})$ is also observable. Since $E_i$ is nonsingular, then also the pair $(E_iA_{i12},A_{i22})$ is observable.

In addition, if we partition $T_i=\begin{bmatrix}
T_{i1} &T_{i2}
\end{bmatrix} $, where $T_{i1}$ consists of the first $v_i$ columns of $T_i$, then the unobservable subspace is given by $\mathrm{im}~T_{i2}=\mathrm{ker}~O_{Fi}$,
where $O_{Fi}=\mathrm{col}(F_i,F_iA,\cdots,F_iA^{n-1})$. Note that $\mathrm{im}~T_{i1}=(\mathrm{ker}~O_{Fi})^{\perp}$. 

We now proceed with defining the gain matrices $P_i$ and $Q_i$ in the output equation of 
\eqref{obsl}. For $i \in \mathcal{N}$, define $S_i \in \mathbb{R}^{n \times (n - p_i)}$ and $K_i \in \mathbb{R}^{n \times m_i}$ by
\begin{equation}  \label{outgain1}
S_i := \begin{bmatrix} 0 \\ I_{n - p_i} \end{bmatrix} ~\mbox{ and }~ K_i := \begin{bmatrix} E_i^{-1} \\ H_i \\ 0 \end{bmatrix} D_i^{\dagger}.
\end{equation}
Here, $H_i \in \mathbb{R}^{(v_i - p_i) \times p_i}$ still needs to be defined. Now define 
\begin{equation} \label {Tis}
T_{is} :=  T_i S_i
\end{equation}
as the $n \times (n -p_i)$ matrix consisting of the last $n-p_i$ columns of the orthogonal matrix $T_i$. Next define
\begin{equation}  \label{outgain2}
P_i := T_{is} ~\mbox{ and }~ Q_i := T_i K_i
\end{equation}
To analyze and further synthesize the local observer (\ref{obsl}), we define the local estimation error of the $i$-th observer as
\begin{equation} \label{err}
e_i:=\hat{x}_i-x.
\end{equation}
Using the definitions \eqref{outgain2} and combining (\ref{sys0}) and (\ref{obsl}) shows that $e_i$ satisfies:
\begin{equation} \label{errordynamics}
\begin{array}{ll}
\dot{e}_i & = P_i \dot{z}_i + Q_i \dot{y}_i - \dot{x}\\
&= T_{is} \dot{z}_i+T_i K_i\dot{y}_i -\dot{x}\\
&= T_iS_i \left( N_iz_i+L_iy_i+\gamma r_iM_i\sum_{j = 1}^N a_{ij}(\hat{x}_j-\hat{x}_i) \right)+(T_iK_iC_i-I)Ax\\
&= T_iS_i \left( N_iS_i^T (T_i^Te_i-K_iy_i+T_i^Tx)+L_iy_i+\gamma r_iM_i\sum_{j = 1}^N a_{ij}(\hat{x}_j-\hat{x}_i) \right)+(T_iK_iC_i-I)Ax\\
&= T_iS_iN_iS_i^T T_i^Te_i+\gamma r_iT_iS_iM_i\sum_{j = 1}^N a_{ij}(e_j-e_i)\\
&\quad+T_i \left( (S_iL_i-S_iN_iS_i^T K_i)D_iF_iT_i+S_iN_iS_i^T+(K_iD_iF_iT_i-I)T_i^TAT_i \right)T_i^Tx
\end{array}
\end{equation}

As a first step to achieve stable error dynamics it is required that the right hand side of the differential equation \eqref{errordynamics} does not depend on the state $x$. This can be achieved by choosing the local observer gain matrices $N_i$ and $L_i$ in such a way that 
\begin{equation}\label{eeq}
\begin{array}{l}
(S_iL_i-S_iN_iS_i^T K_i)D_iF_iT_i+S_iN_iS_i^T+(K_iD_iF_iT_i-I)T_i^TAT_i=0
\end{array}.
\end{equation}
It can be checked by straightforward verification that \eqref{eeq} is achieved by choosing \begin{equation} \label{out1}
N_i=\begin{bmatrix}
A_{i22}-H_iE_iA_{i12} &0\\
A_{i32} &A_{iu}
\end{bmatrix},
\end{equation}
\begin{equation}  \label{out2}
L_i=\begin{bmatrix}
A_{i21}-H_iE_iA_{i11}\\
A_{i31}
\end{bmatrix}E_i^{-1}D_i^{\dag}+N_iS_i^T K_i.
\end{equation}
Here, again we note that $H_i \in \mathbb{R}^{(v_i - p_i) \times p_i}$ still needs to be defined. With this choice of $N_i$ and $L_i$, the local error satisfies the differential equation
\begin{equation}
\dot{e}_i= T_{is}N_i T_{is}^T e_i+\gamma r_iT_{is} M_i\sum_{j = 1}^N a_{ij}(e_j-e_i).
\end{equation}
Let $e:=\mathrm{col}(e_1,e_2,\cdots,e_N)$ be the joint vector of errors. 
Define 
\begin{equation} \label{Ts}
T_s :=\mathrm{diag}\{T_{1s},\cdots,T_{Ns}\},
\end{equation}
\begin{equation} \label {M}
M=\mathrm{diag}\{M_1,\cdots,M_N\},
\end{equation}
\begin{equation} \label {N}
N=\mathrm{diag}\{N_1,\cdots,N_N\}.
\end{equation}
Clearly then, each global error trajectory satisfies the differential equation
\begin{equation} \label{sys_eg}
\dot{e}= \left( T_{s}N T_{s}^T -\gamma T_s M(R\mathcal{L}\otimes I_n) \right) e,
\end{equation}
where $R$ is as defined in Lemma 1.  

Note that $\mathrm{im}~T_s$ is an invariant subspace for the differential equation \eqref{sys_eg}.
Even more, it can be shown that each feasible global error trajectory $e$ lives in the subspace $\mathrm{im}~T_s$. We state this as a lemma:
\begin{lemma} \label{invariance}
Assume that the gain matrices $P_i$, $Q_i$, $N_i$ and $L_i$ are given by 
\eqref{outgain2}, \eqref{outgain1}, \eqref{out1} and \eqref{out2}. Let $e:=\mathrm{col}(e_1,e_2,\cdots,e_N)$ be the joint vector of errors, with for $i \in \mathcal{N}$ the local error equal to $e_i = \hat{x}_i  - x$, where $x$ is a trajectory of the plant \eqref{sys0} and $\hat{x}_i$ satisfies \eqref{obsl}. Then $e(t) \in \mathrm{im}~T_s$ for all $t \in \mathbb{R}$.
\end{lemma}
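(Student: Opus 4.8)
The plan is to reduce the statement to a pointwise algebraic identity for each local error. Since $T_s = \mathrm{diag}\{T_{1s},\cdots,T_{Ns}\}$ is block diagonal, $\mathrm{im}~T_s = \mathrm{im}~T_{1s} \times \cdots \times \mathrm{im}~T_{Ns}$, so it suffices to prove $e_i(t) \in \mathrm{im}~T_{is}$ for every $i \in \mathcal{N}$ and every $t \in \mathbb{R}$. I note that only the output gains $P_i = T_{is}$ and $Q_i = T_i K_i$ will be used; $N_i$ and $L_i$ play no role in this lemma.

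First I would identify the subspace $\mathrm{im}~T_{is}$ as $\mathrm{ker}~F_i$. From $T_{is} = T_i S_i$ with $S_i = \mathrm{col}(0, I_{n-p_i})$ and from \eqref{trf}, which gives $F_i T_i = \begin{bmatrix} E_i & 0 & 0 \end{bmatrix}$, one has $F_i T_{is} = F_i T_i S_i = 0$, since in $F_i T_i$ only the first $p_i$ columns are nonzero while the first $p_i$ rows of $S_i$ are zero. Hence $\mathrm{im}~T_{is} \subseteq \mathrm{ker}~F_i$. Because $T_{is}$ consists of $n-p_i$ orthonormal columns it has full column rank, so $\dim \mathrm{im}~T_{is} = n-p_i = n - \mathrm{rank}~F_i = \dim \mathrm{ker}~F_i$, and therefore $\mathrm{im}~T_{is} = \mathrm{ker}~F_i$. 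Thus the claim $e_i(t) \in \mathrm{im}~T_{is}$ is equivalent to $F_i e_i(t) = 0$.

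Next I would verify $F_i e_i(t) = 0$ directly from the output equation of \eqref{obsl}. Using $\hat{x}_i = P_i z_i + Q_i y_i = T_{is} z_i + T_i K_i y_i$ together with $F_i T_{is} = 0$ and the computation $F_i T_i K_i = \begin{bmatrix} E_i & 0 & 0 \end{bmatrix} \mathrm{col}(E_i^{-1}, H_i, 0) D_i^{\dagger} = D_i^{\dagger}$, one gets $F_i \hat{x}_i = D_i^{\dagger} y_i$. Since $y_i = C_i x = D_i F_i x$ and $D_i^{\dagger} D_i = I_{p_i}$, this yields $F_i \hat{x}_i = F_i x$, hence $F_i e_i = F_i \hat{x}_i - F_i x = 0$. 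As this is an algebraic identity valid wherever the trajectories of \eqref{sys0} and \eqref{obsl} are defined — which, both systems being linear and time-invariant, is all of $\mathbb{R}$ — we conclude $e_i(t) \in \mathrm{im}~T_{is}$ for all $t$ and all $i$, and therefore $e(t) \in \mathrm{im}~T_s$ for all $t \in \mathbb{R}$.

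There is no serious obstacle here; the argument is essentially bookkeeping with the block partitions induced by the observability decomposition \eqref{trf}. The only points requiring a little care are the dimension count that upgrades the inclusion $\mathrm{im}~T_{is} \subseteq \mathrm{ker}~F_i$ to an equality, and the observation that $F_i e_i \equiv 0$ is a consequence of the output map alone (not of the error dynamics), so that it holds for all $t \in \mathbb{R}$ rather than merely for $t \geq 0$ or asymptotically.
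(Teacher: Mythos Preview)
Your proof is correct and follows essentially the same route as the paper's: both show that each local error $e_i$ lies in $\mathrm{im}\,T_{is}$ as a direct algebraic consequence of the output equation $\hat{x}_i = T_{is}z_i + T_iK_iy_i$, independent of the error dynamics. The only cosmetic difference is that the paper characterizes $\mathrm{im}\,T_{is}$ as $\ker T_{ip}^T$ (via orthogonality of $T_i$) and verifies $T_{ip}^T e_i = 0$, whereas you characterize it as $\ker F_i$ (via a dimension count) and verify $F_i e_i = 0$; since $F_i = E_i T_{ip}^T$ with $E_i$ invertible, the two computations are equivalent.
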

\begin{proof}
For $i \in \mathcal{N}$, let $T_{ip}$  be the $n \times p_i$ matrix consisting of the first $p_i$ columns of $T_i$. Then we have $T_i = \begin{bmatrix} T_{ip} & T_{is} \end{bmatrix}$. Since $T_i$ is orthogonal, we have $\mathrm{im}~ T_{is} = \mathrm{ker}~ T_{ip}^T$. Let $e_i$ be a local error trajectory. We have
\begin{equation}\label{xhe}
\begin{array}{ll}
T_{ip}^Te_i&= T_{ip}^T(\hat{x}_i-x)\\
&=T_{ip}^TT_{is} z_i +T_{ip}^TT_iK_iC_ix-T_{ip}^Tx\\
&= \begin{bmatrix}
I_{p_i} &0
\end{bmatrix}K_iC_ix-T_{ip}^Tx\\
&=(E_i^{-1}F_i-T_{ip}^T)x.
\end{array}
\end{equation}
By \eqref{trf} we have
\begin{equation}
F_i\begin{bmatrix}
T_{ip} &T_{is}
\end{bmatrix}=\begin{bmatrix}
E_i &0
\end{bmatrix},
\end{equation}
which implies
\begin{equation}
F_i=\begin{bmatrix}
E_i &0
\end{bmatrix}\begin{bmatrix}
T_{ip} &T_{is}
\end{bmatrix}^T=E_iT_{ip}^T.
\end{equation}
Thus we obtain $T_{ip}^Te_i =0$ and hence $e_i(t) \in  \mathrm{ker}~ T_{ip}^T$ for all $t \in \mathbb{R}$. We conclude that $e_i(t) \in \mathrm{im}~ T_{is}$ so $e(t) \in \mathrm{im}~ T_{s}$  for all $t \in \mathbb{R}$.
\end{proof} 
From Lemma \ref{invariance} we infer that the distributed observer \eqref{obsl} achieves omniscience asymptotically (\ref{esg}) if each solution $e$ of \eqref{sys_eg} such that $e(t) \in \mathrm{im}~T_s$ for all $t \in \mathbb{R}$ converges to zero as $t$ runs off to infinity. 

Up to now, we have specified in the to be designed local observer \eqref{obsl} the gain matrices $P_i$, $Q_i$, $N_i$ and $L_i$. However, $Q_i$, $N_i$ and $L_i$ still depend on the parameter matrix $H_i \in \mathbb{R}^{(v_i - p_i) \times p_i}$ that has to be specified. Also the matrix $M_i$ and coupling gain $\gamma$ still need to be specified. In order to proceed, we state the following two lemmas. The first of these is standard:
\begin{lemma}\cite{Li2014book} \label{standard}
For a strongly connected directed graph $\mathcal{G}$, zero is a simple eigenvalue of $\hat{\mathcal{L}}=R\mathcal{L+L}^TR$ introduced in Lemma 1. Furthermore, its eigenvalues can be ordered as $\lambda _1=0<\lambda _2\leqslant \lambda _3\leqslant \cdots \leqslant \lambda _N $. Furthermore, there exists an orthogonal matrix $U=\begin{bmatrix} \frac{1}{\sqrt{N}}\mathbf{1}_N &U_2 \end{bmatrix} $, where $U_2\in \mathbb{R}^{N \times (N-1)}$, such that $U^T(R\mathcal{L+L}^TR)U=\mathrm{diag}\{0,\lambda _2,\cdots,\lambda _N\}$.
\end{lemma}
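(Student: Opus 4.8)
The plan is to treat this as a pure spectral statement about the symmetric matrix $\hat{\mathcal{L}} = R\mathcal{L} + \mathcal{L}^T R$, leaning on what Lemma~\ref{mirror} already delivers: $\hat{\mathcal{L}}$ is symmetric and positive semi-definite with $\hat{\mathcal{L}}\mathbf{1}_N = 0$. Symmetry and positive semi-definiteness immediately give that all eigenvalues are real and nonnegative, that $\hat{\mathcal{L}}$ is orthogonally diagonalizable, and that $0$ is an eigenvalue with eigenvector $\mathbf{1}_N$. Hence the only substantive thing to prove is that this zero eigenvalue is \emph{simple}, i.e. that $\mathrm{ker}~\hat{\mathcal{L}} = \mathrm{span}\{\mathbf{1}_N\}$; once $\dim \mathrm{ker}~\hat{\mathcal{L}} = 1$ is known, positive semi-definiteness forces the remaining eigenvalues to be strictly positive, which yields the ordering $\lambda_1 = 0 < \lambda_2 \leqslant \cdots \leqslant \lambda_N$.

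To determine the kernel I would first identify $\hat{\mathcal{L}}$ as the Laplacian of the undirected mirror graph. A short computation of its entries, using $\mathcal{L} = \mathcal{D} - \mathcal{A}$ and the fact that $R$ is diagonal, gives the off-diagonal entries $\hat{\mathcal{L}}_{ij} = -(r_i a_{ij} + r_j a_{ji})$ for $i \neq j$, while Lemma~\ref{mirror} supplies the row-sum-zero property $\hat{\mathcal{L}}\mathbf{1}_N = 0$. Thus $\hat{\mathcal{L}}$ has exactly the structure of a weighted graph Laplacian with symmetric nonnegative edge weights $w_{ij} := r_i a_{ij} + r_j a_{ji} \geqslant 0$, the nonnegativity coming from the positivity of the $r_i$ asserted in Lemma~\ref{mirror} together with $a_{ij} \geqslant 0$. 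Its quadratic form then admits the standard sum-of-squares representation $v^T \hat{\mathcal{L}} v = \sum_{i < j} w_{ij}(v_i - v_j)^2$, which I would confirm by a brief expansion.

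With this representation the kernel is easy to describe. Since $\hat{\mathcal{L}} \geqslant 0$, a vector $v$ lies in $\mathrm{ker}~\hat{\mathcal{L}}$ if and only if $v^T \hat{\mathcal{L}} v = 0$, and the sum-of-squares formula then forces $v_i = v_j$ for every pair with $w_{ij} > 0$. Now $w_{ij} > 0$ holds precisely when $a_{ij} > 0$ or $a_{ji} > 0$, i.e. precisely when $i$ and $j$ are joined by a directed edge of $\mathcal{G}$ in one direction or the other. The key step is to promote this local equality to global constancy: because $\mathcal{G}$ is strongly connected, any two nodes are joined by a directed path, and each directed edge $(k,l)$ of that path has $a_{lk} > 0$, hence $w_{kl} > 0$, so it equates the corresponding components of $v$; chaining these equalities along the path yields $v_i = v_j$ for all $i, j$, whence $v \in \mathrm{span}\{\mathbf{1}_N\}$. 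Combined with $\hat{\mathcal{L}}\mathbf{1}_N = 0$ this gives $\mathrm{ker}~\hat{\mathcal{L}} = \mathrm{span}\{\mathbf{1}_N\}$, so the zero eigenvalue is simple. I expect this connectivity-to-constancy step to be the only nontrivial obstacle; everything else is bookkeeping.

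It remains to produce the orthogonal matrix $U$ of the stated form. Since $\|\mathbf{1}_N\| = \sqrt{N}$, the vector $\tfrac{1}{\sqrt{N}}\mathbf{1}_N$ is a unit eigenvector for $\lambda_1 = 0$. By the spectral theorem applied to the symmetric matrix $\hat{\mathcal{L}}$, the orthogonal complement $(\mathrm{span}\{\mathbf{1}_N\})^{\perp}$ is $\hat{\mathcal{L}}$-invariant, and the restriction of $\hat{\mathcal{L}}$ to it is symmetric with eigenvalues $\lambda_2, \ldots, \lambda_N$. Choosing an orthonormal eigenbasis of this restriction as the columns of $U_2 \in \mathbb{R}^{N \times (N-1)}$ and setting $U = \begin{bmatrix} \tfrac{1}{\sqrt{N}}\mathbf{1}_N & U_2 \end{bmatrix}$ produces an orthogonal $U$ with $U^T \hat{\mathcal{L}} U = \mathrm{diag}\{0, \lambda_2, \ldots, \lambda_N\}$, which is exactly the assertion.
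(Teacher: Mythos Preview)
Your argument is correct and self-contained. Note, however, that the paper does not actually prove this lemma: it is stated as a standard result with a citation to \cite{Li2014book}, so there is no proof in the paper to compare against. Your approach---identifying $\hat{\mathcal{L}}$ as the Laplacian of the undirected mirror graph, writing its quadratic form as $\sum_{i<j} w_{ij}(v_i-v_j)^2$ with $w_{ij}=r_i a_{ij}+r_j a_{ji}$, and using strong connectivity to force constancy on the kernel---is precisely the classical route and fills in the details cleanly.

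One small caveat: you invoke $a_{ij}\geqslant 0$ to conclude $w_{ij}\geqslant 0$ and to get $w_{ij}>0$ whenever an edge is present in either direction. The paper's preliminaries only say $a_{ji}\neq 0$ iff $(i,j)\in\mathcal{E}$, without explicitly stating nonnegativity of the weights. This is of course the intended (and standard) setting, and Lemma~\ref{mirror} already hands you positive semi-definiteness of $\hat{\mathcal{L}}$, but your chaining argument for the kernel does rely on $w_{ij}>0$ along edges, which in turn needs the weights to be positive rather than merely nonzero. It would be worth making that assumption explicit when you write this up.
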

Our second lemma was proven in \cite{Han2017}. In order to make this paper self contained, we also include the proof here.
\begin{lemma} \label{key}
Let $\mathcal{L}$ be the Laplacian matrix associated with the strongly connected directed graph $\mathcal{G}$. For all $g_i>0$, $i\in \mathcal{N}$, there exists $\epsilon>0$ such that
\begin{equation} \label{L1}
T^T((R\mathcal{L+L}^TR)\otimes I_n)T+G>\epsilon I_{nN},
\end{equation}
where $T=\mathrm{diag}\{T_1,\cdots,T_N\}$, $R$ is defined as in Lemma \ref{mirror}, $G=\mathrm{diag}\{G_1,\cdots,G_N\}$, and $G_i=\begin{bmatrix}
 g_i I_{v_i} & 0\\
 0 &0_{n-v_i}
\end{bmatrix}$, $i\in \mathcal{N}$.
\end{lemma}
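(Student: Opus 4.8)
The plan is to establish that the symmetric matrix
\[
W := T^T\big((R\mathcal{L}+\mathcal{L}^TR)\otimes I_n\big)T + G
\]
is positive definite; since $W$ is a fixed symmetric matrix of finite size, this immediately yields the existence of $\epsilon>0$ with $W>\epsilon I_{nN}$ (for instance $\epsilon$ equal to half the smallest eigenvalue of $W$). Both summands of $W$ are positive semi-definite: the first because $\hat{\mathcal{L}}=R\mathcal{L}+\mathcal{L}^TR\geq 0$ by Lemma \ref{mirror} and $T$ is orthogonal (being block diagonal with orthogonal blocks $T_i$), the second because $G_i\geq 0$ whenever $g_i>0$. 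Hence $W\geq 0$, and it suffices to show that the kernels of the two summands intersect only in the origin.

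So I would take $x=\mathrm{col}(x_1,\dots,x_N)$ lying in both kernels and argue that $x=0$. From $Gx=0$ together with $g_i>0$, the first $v_i$ entries of each $x_i$ must vanish. From $T^T(\hat{\mathcal{L}}\otimes I_n)Tx=0$ and orthogonality of $T$, we get $Tx\in\ker(\hat{\mathcal{L}}\otimes I_n)$; since zero is a \emph{simple} eigenvalue of $\hat{\mathcal{L}}$ with eigenvector $\mathbf{1}_N$ (Lemma \ref{standard}), it follows that $\ker(\hat{\mathcal{L}}\otimes I_n)=\mathrm{im}(\mathbf{1}_N\otimes I_n)$, so $Tx=\mathbf{1}_N\otimes w$ for some $w\in\mathbb{R}^n$. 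Reading off the $i$-th block gives $T_ix_i=w$, i.e. $x_i=T_i^Tw$ for every $i\in\mathcal{N}$.

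Combining the two facts: writing $T_i=\begin{bmatrix}T_{i1}&T_{i2}\end{bmatrix}$ with $T_{i1}$ the first $v_i$ columns (so that $T_i^Tw$ has first block $T_{i1}^Tw$), the requirement that the first $v_i$ entries of $x_i=T_i^Tw$ vanish is exactly $T_{i1}^Tw=0$, i.e. $w\perp\mathrm{im}~T_{i1}=(\ker O_{Fi})^{\perp}$, so $w\in\ker O_{Fi}$, the unobservable subspace of $(F_i,A)$. As this holds for all $i$, and since $O_F:=\mathrm{col}(F,FA,\dots,FA^{n-1})$ with $F=\mathrm{col}(F_1,\dots,F_N)$ has rows that are just the union of the rows of the $O_{Fi}$, we obtain $w\in\bigcap_{i=1}^N\ker O_{Fi}=\ker O_F$. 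Since $(F,A)$ is observable, $\ker O_F=\{0\}$, hence $w=0$, and therefore $x_i=T_i^Tw=0$ for all $i$, i.e. $x=0$. This proves $W>0$, and the lemma follows.

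I do not expect a real obstacle here. The two points that need care are the identity $\ker(\hat{\mathcal{L}}\otimes I_n)=\mathrm{im}(\mathbf{1}_N\otimes I_n)$, which rests on zero being a simple eigenvalue of $\hat{\mathcal{L}}$, and the translation of $G_ix_i=0$ into the orthogonality condition $T_{i1}^Tw=0$ and thence into membership of the unobservable subspace via the decomposition \eqref{trf}. Everything else is routine manipulation with Kronecker products and block-orthogonal matrices.
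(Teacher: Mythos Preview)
Your argument is correct and considerably cleaner than the paper's. Both proofs ultimately rest on the same observability fact---that $\bigcap_{i=1}^N \ker O_{Fi}=\{0\}$, equivalently that $\begin{bmatrix}T_{11}&\cdots&T_{N1}\end{bmatrix}$ has full row rank---but the paths to it differ substantially. The paper diagonalizes $\hat{\mathcal{L}}$ via the orthogonal matrix $U$ of Lemma~\ref{standard}, then pushes the inequality through a chain of equivalent reformulations and repeated Schur complement steps (equations \eqref{L2}--\eqref{L8}) before finally reducing to the full-row-rank condition. You instead observe directly that $W$ is a sum of two positive semi-definite matrices and show their null spaces meet only at zero, reading off $Tx=\mathbf{1}_N\otimes w$ from the simple zero eigenvalue of $\hat{\mathcal{L}}$ and then invoking observability. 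Your route is shorter, avoids all the Schur complement bookkeeping, and makes the role of strong connectivity (simplicity of the zero eigenvalue) and joint observability (triviality of the common unobservable subspace) completely transparent. The paper's computation, on the other hand, is more explicit about the quantitative structure of the inequality, which could in principle give a handle on how large $\epsilon$ can be taken, though the paper does not exploit this.
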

\begin{proof}
The inequality (\ref{L1}) holds if and only if the following inequality holds.
\begin{equation} \label{L2}
(U^T(R\mathcal{L+L}^TR)U)\otimes I_n+(U^T\otimes I_n)TGT^T(U\otimes I_n)>0,
\end{equation}
where $U$ is as in Lemma \ref{standard}.
The inequality (\ref{L2}) holds if the following inequality holds:
\begin{equation} \label{L3}
\lambda _2 I_{Nn}-(U\otimes I_n)\begin{bmatrix}
\lambda _2 I_n &0\\
0 &0_{(N-1)n}
\end{bmatrix}(U^T\otimes I_n)+TGT^T>0.
\end{equation}
Since $U=\begin{bmatrix}
\frac{1}{\sqrt{N}}\mathbf{1}_N &U_2
\end{bmatrix} $ and $U^T=\begin{bmatrix}
\frac{1}{\sqrt{N}}\mathbf{1}_N^T \\
U_2^T
\end{bmatrix} $, the inequality (\ref{L3}) is equivalent to
\begin{equation} \label{L4}
\lambda _2 I_{Nn}-\frac{\lambda _2}{N}(\mathbf{1}_N\otimes I_n)(\mathbf{1}_N^T\otimes I_n)+TGT^T>0.
\end{equation}
By pre- and post- multiplying with $T^T$ and $T$, the inequality (\ref{L4}) is equivalent to
\begin{equation}
\lambda _2 I_{Nn}-\frac{\lambda _2}{N}T^T(\mathbf{1}_N\otimes I_n)(\mathbf{1}_N^T\otimes I_n)T+G>0,
\end{equation}
that is 
\begin{equation} \label{L5}
\lambda _2 I_{Nn}+G-\frac{\lambda _2}{N}\begin{bmatrix}
T_1 &\cdots &T_N
\end{bmatrix}^T\begin{bmatrix}
T_1 &\cdots &T_N
\end{bmatrix}>0.
\end{equation}
By using the Schur complement lemma \cite{Boyd1994}, the inequality (\ref{L5}) is equivalent to
\begin{equation} \label{L51}
\begin{bmatrix}
\Psi _1  &\cdots &0  &T_{1}^T\\
\vdots  &\ddots &\vdots&\vdots\\
0 &\cdots &\Psi _N &T_{N}^T\\
T_{1}  &\cdots  &T_{N}&\frac{N}{\lambda _2} I_n
\end{bmatrix}>0.
\end{equation}
where $\Psi _i :=\begin{bmatrix}
(\lambda _2+g_i)I_{v_i} &0 \\
0 &\lambda _2 I_{n-v_i}
\end{bmatrix}$, $i\in \mathcal{N}$.
Now, partition the orthogonal matrix $T_i$ as  $T_i=\begin{bmatrix}
T_{i1} &T_{i2}
\end{bmatrix}$, 
with $T_{i1} \in \mathbb{R}^{n \times v_i}$ and $T_{i2} \in \mathbb{R}^{n \times (n-v_i)}$, $i\in \mathcal{N}$. Clearly, $T_{i1}T_{i1}^T+T_{i2}T_{i2}^T=I_n$. Again using the Schur complement lemma, \eqref{L51} is then equivalent with
\begin{equation}\label{L6}
\begin{bmatrix}
\lambda _2 I_{n-v_1} &0 &\cdots&0  &T_{12}^T\\
0 &\Psi _2  &\cdots &0  &T_{2}^T\\
\vdots  &\vdots  &\ddots &\vdots&\vdots\\
0 &0 &\cdots &\Psi _N  &T_{N}^T\\
T_{12} &T_{2} &\cdots  &T_{N} &\frac{N}{\lambda _2} I_n-\frac{1}{\lambda _2 +g_1}T_{11}T_{11}^T
\end{bmatrix}>0.
\end{equation}
By repeatedly using the Schur complement lemma, we finally obtain that inequality (\ref{L5}) holds if and only if 
\begin{equation}\label{L8}
\frac{N}{\lambda _2} I_n-\sum_{i=1}^N \frac{1}{\lambda _2 +g_i}T_{i1}T_{i1}^T-\sum_{i=1}^N\frac{1}{\lambda _2 }T_{i2}T_{i2}^T>0.
\end{equation}
The left-hand side of inequality (\ref{L8}) is equal to
\begin{equation}
\begin{array}{ll}
&\frac{N}{\lambda _2} I_n-\sum_{i=1}^N \frac{1}{\lambda _2 +g_i}T_{i1}T_{i1}^T-\sum_{i=1}^N\frac{1}{\lambda _2 }T_{i2}T_{i2}^T\\
=&\frac{N}{\lambda _2} I_n-\sum_{i=1}^N\frac{1}{\lambda _2 }T_{i2}T_{i2}^T-\sum_{i=1}^N\frac{1}{\lambda _2 }T_{i1}T_{i1}^T\\
&+\sum_{i=1}^N\frac{1}{\lambda _2 }T_{i1}T_{i1}^T-\sum_{i=1}^N \frac{1}{\lambda _2 +g_i}T_{i1}T_{i1}^T\\
=&\sum_{i=1}^N (\frac{1}{\lambda _2}-\frac{1}{\lambda _2 +g_i})T_{i1}T_{i1}^T\\
\geqslant &\sum_{i=1}^N (\frac{1}{\lambda _2}-\frac{1}{\lambda _2 +g_{min}})T_{i1}T_{i1}^T\\
=&(\frac{N}{\lambda _2}-\frac{N}{\lambda _2 +g_{min}})\begin{bmatrix}
T_{11} &\cdots & T_{N1}
\end{bmatrix} \begin{bmatrix}
T_{11} &\cdots & T_{N1}
\end{bmatrix}^T,
\end{array}
\end{equation}
where $g_{min}$ is the minimum value of $g_i$, $i\in \mathcal{N}$. Obviously, we have $(\frac{N}{\lambda _2}-\frac{N }{\lambda _2 +g_{min}})>0 $ since $g_{min}>0$.

We will now prove that $\mathrm{rank}\begin{bmatrix}
T_{11} & T_{21} &\cdots & T_{N1}
\end{bmatrix}=n$, so that it has full row rank.
Indeed, for $T_{i1}$, we have
\begin{equation} \label{L9}
\mathrm{im}~T_{i1}=(\mathrm{ker}~O_{Fi})^{\perp}
\end{equation}
where $O_{Fi}=\mathrm{col}(F_i,F_iA,\cdots,F_iA^{n-1})$ is the unobservable subspace of $(F_i, A)$.
Hence,
\begin{equation}
\begin{array}{ll}
&(\mathrm{im}\begin{bmatrix}
T_{11} & T_{21} &\cdots & T_{N1}
\end{bmatrix})^{\perp}\\
=&(\mathrm{im}~T_{11}+ \mathrm{im}~T_{21}+\cdots \mathrm{im}~T_{N1})^{\perp}\\
=&\bigcap_{i=1}^N (\mathrm{im}~T_{i1})^{\perp}\\
=&\bigcap_{i=1}^N \mathrm{ker}~O_{Fi}\\
=&\mathrm{ker}\begin{bmatrix}
O_{F_1}\\
\vdots\\
O_{F_N}
\end{bmatrix}\\
=&0,
\end{array}
\end{equation}
where we have used the fact that the pair $(F,A)$ is observable.
This implies
\begin{equation}
\mathrm{rank}\begin{bmatrix}
T_{11} & T_{21} &\cdots & T_{N1}
\end{bmatrix}=n.
\end{equation}
Consequently, $\begin{bmatrix}
T_{11} & T_{21} &\cdots & T_{n1}
\end{bmatrix}$ has full row rank $n$, so we obtain:
\begin{equation}
(\frac{N}{\lambda _2}-\frac{N}{\lambda _2 +g_{min}})\begin{bmatrix}
T_{11} &\cdots & T_{N1}
\end{bmatrix} \begin{bmatrix}
T_{11} &\cdots & T_{N1}
\end{bmatrix}^T>0.
\end{equation}
We conclude that the left-hand side of (\ref{L1}) is positive definite, and consequently, for any choice of $g_i>0$, $i\in \mathcal{N}$, there exists a scalar $\epsilon>0$ such that inequality (\ref{L1}) holds.

\end{proof}

The following theorem now deals with the existence of a distributed observer of the form (\ref{obsl}) that achieves omniscience asymptotically with an a priori given error convergence rate. A condition for its existence is expressed in terms of solvability of a system of $N$ LMI's. Solutions to these LMI's yield the required gain matrices. Let $r_i>0$, $i\in \mathcal{N}$, be as in Lemma \ref{mirror}. Let $g_i>0$, $i\in \mathcal{N}$, and $\epsilon >0$ be such that (\ref{L1}) holds. Let $\gamma \in \mathbb{R}$. Finally, let $\alpha \geq 0$ be a desired error convergence rate. Recall the definitions \eqref{outgain1} and \eqref{Tis} for $S_i$ and $T_{is}$. We have the following:
\begin{theorem} \label{LMIresult}
There exist gain matrices $N_i$, $L_i$, $M_i$, $P_i$ and $Q_i$, $i\in \mathcal{N}$, such that the distributed observer (\ref{obsl}) achieves omniscience asymptotically and all solutions of the error system (\ref{sys_eg}) converge to zero with convergence rate at least $\alpha$ if there exist positive definite matrices $\mathcal{P}_{ie}\in \mathbb{R}^{(v_i-p_i)\times (v_i-p_i)},\mathcal{P} _{iu}\in \mathbb{R}^{(n-v_i)\times (n-v_i)}$, and a matrix $W_i\in \mathbb{R}^{(v_i-p_i)\times p_i}$ such that
\begin{equation} \label{Th1}
\begin{bmatrix}
\Phi _i+ \gamma  g_i I_{v_i -p_i}  &A_{i32}^T\mathcal{P}_{iu}\\
 \mathcal{P}_{iu}A_{i32} & A_{iu}^T \mathcal{P}_{iu} + \mathcal{P}_{iu}A_{iu} +2\alpha \mathcal{P}_{iu}
\end{bmatrix}-\gamma \epsilon I_{n-p_i}<0,~\forall i\in \mathcal{N},
\end{equation}
where $\Phi _i:= \mathcal{P}_{ie}A_{i22}+A_{i22}^T\mathcal{P}_{ie}-W_iE_iA_{i12}-A_{i12}^TE_i^TW_i^T+2\alpha \mathcal{P}_{ie}$. In that case, the gain matrices in the distributed observer (\ref{obsl}) can be taken as 
\begin{equation} \label{tlm}
K_i:=\begin{bmatrix}
E_i^{-1}\\
H_i\\
0
\end{bmatrix}D^{\dag} ,~ L_i:=\begin{bmatrix}
A_{i21}-H_iE_iA_{i11}\\
A_{i31}
\end{bmatrix}E_i^{-1}D_i^{\dag}+N_iS_i^T K_i
\end{equation}
\begin{equation} \label{tlm1}
M_i:=\begin{bmatrix}
\mathcal{P}_{ie}^{-1} &0\\
0 &\mathcal{P}_{iu}^{-1}
\end{bmatrix}T_{is}^T, ~N_i:=\begin{bmatrix}
A_{i22}-H_iE_iA_{i12} &0\\
A_{i32} &A_{iu}
\end{bmatrix},
\end{equation}
\begin{equation} \label{tlm11}
P_i := T_{is}, ~ Q_i := T_i K_i,
\end{equation}
where $H_{i} :=\mathcal{P}_{ie}^{-1}W_i$~, $i\in \mathcal{N}$.
\end{theorem}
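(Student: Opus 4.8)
The plan is to combine the invariance property from Lemma \ref{invariance} with a block-diagonal Lyapunov argument that uses Lemma \ref{key} to reduce the stability requirement to the $N$ decoupled LMIs \eqref{Th1}.

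First I would record that, since $H_i = \mathcal{P}_{ie}^{-1}W_i$ enters only through \eqref{out1} and \eqref{out2}, the gain matrices prescribed in the theorem still satisfy \eqref{eeq}; hence the joint error $e = \mathrm{col}(e_1,\cdots,e_N)$ obeys \eqref{sys_eg}, and by Lemma \ref{invariance} every error trajectory lives in $\mathrm{im}~T_s$. As $T_s$ has orthonormal columns I set $\zeta := T_s^T e$, so $e = T_s\zeta$ and $\|e(t)\| = \|\zeta(t)\|$, and the reduced error dynamics read $\dot\zeta = N\zeta - \gamma M(R\mathcal{L}\otimes I_n)T_s\zeta$. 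It therefore suffices to show that, whenever \eqref{Th1} holds, $\zeta$ decays to $0$ with rate at least $\alpha$.

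Next I would take $V(\zeta) = \zeta^T\mathcal{P}\zeta$ with $\mathcal{P} := \mathrm{diag}\{\mathcal{P}_1,\cdots,\mathcal{P}_N\}$ and $\mathcal{P}_i := \mathrm{diag}\{\mathcal{P}_{ie},\mathcal{P}_{iu}\} > 0$. The key identity --- and the motivation for the choice of $M_i$ in \eqref{tlm1} --- is $\mathcal{P}_iM_i = T_{is}^T$, so that $\mathcal{P}M = T_s^T$. Using this, and symmetrizing the coupling term, gives
\[
\dot V = \zeta^T\big(\mathcal{P}N + N^T\mathcal{P}\big)\zeta - \gamma\,\zeta^T T_s^T(\hat{\mathcal{L}}\otimes I_n)T_s\,\zeta ,
\]
with $\hat{\mathcal{L}} = R\mathcal{L}+\mathcal{L}^TR$. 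Writing $T_s = TS$ with $S := \mathrm{diag}\{S_i\}$ of full column rank and applying the congruence $S^T(\cdot)S$ to the inequality of Lemma \ref{key} yields $T_s^T(\hat{\mathcal{L}}\otimes I_n)T_s > \epsilon I - S^TGS$, where $S_i^TG_iS_i = \mathrm{diag}\{g_iI_{v_i-p_i},\,0_{n-v_i}\}$. For $\gamma > 0$ this gives, for every $\zeta\neq 0$,
\[
\dot V + 2\alpha V < \zeta^T\big(\mathcal{P}N + N^T\mathcal{P} + 2\alpha\mathcal{P} - \gamma\epsilon I + \gamma S^TGS\big)\zeta ,
\]
and the matrix in parentheses is block diagonal: substituting $\mathcal{P}_{ie}H_i = W_i$, its $i$-th block is exactly the left-hand side of \eqref{Th1}. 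Hence \eqref{Th1} forces $\dot V + 2\alpha V < 0$ for $\zeta\neq 0$, so $V(\zeta(t)) \leq e^{-2\alpha t}V(\zeta(0))$; since $V$ is a positive definite quadratic form and $\|e(t)\| = \|\zeta(t)\|$, each $e_i(t) = \hat x_i(t) - x(t)$ tends to $0$ with convergence rate at least $\alpha$, i.e. omniscience is achieved.

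The verification of \eqref{eeq} and the algebraic identification of the $i$-th block with \eqref{Th1} are routine bookkeeping. The step requiring genuine care --- and the main obstacle --- is the passage through Lemma \ref{key}: one has to recognize $T_s = TS$, compute $S^TGS$ correctly, and exploit that $\mathcal{P}M$ collapses exactly to $T_s^T$, so that the Lyapunov derivative becomes a block-diagonal quadratic form; this, together with taking $\gamma > 0$, is precisely what makes the $N$ separate LMIs \eqref{Th1} sufficient for stability at the prescribed rate.
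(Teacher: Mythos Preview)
Your proposal is correct and follows essentially the same Lyapunov argument as the paper. The only cosmetic difference is that you project onto $\mathrm{im}\,T_s$ at the outset (working with $\zeta=T_s^Te$ and $\mathcal{P}_i=\mathrm{diag}\{\mathcal{P}_{ie},\mathcal{P}_{iu}\}$), whereas the paper keeps the full $e$-coordinates with $\mathcal{P}_i=T_i\,\mathrm{diag}\{I_{p_i},\mathcal{P}_{ie},\mathcal{P}_{iu}\}\,T_i^T$ and invokes the invariance $e=T_sz$ only at the end; after pulling back via $T_s$ the two Lyapunov functions coincide, and both routes land on the same block-diagonal inequality identified with \eqref{Th1} through Lemma~\ref{key}.
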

\begin{proof}
By taking the gain matrices \eqref{tlm},  \eqref{tlm1} and  \eqref{tlm11}, the global error satisfies the differential equation \eqref{sys_eg}. According to Lemma \ref{invariance} we also have $e(t) \in \mathrm{im}~ T_s$ for all $t \in \mathbb{R}$. As a candidate Lyapunov function for the error system we take
\begin{equation}
V(e) = e^T \mathcal{P} e
\end{equation}
where $\mathcal{P}:=\mathrm{diag}\{\mathcal{P}_1,\cdots,\mathcal{P}_N\}$ and 
\[
\mathcal{P}_i:=T_i\begin{bmatrix}
I_{p_i} &0 &0\\
0 &\mathcal{P}_{ie} &0\\
0 &0 & \mathcal{P}_{iu}
\end{bmatrix}T_i^T.
\]
Clearly then $\mathcal{P} >0$.
The time-derivative of $V$ is
\begin{equation}\label{Ve}
\dot{V}(e)=e^T(\mathcal{P}T_sNT_s^T+T_sN^TT_s^T\mathcal{P}-\gamma \mathcal{P} T_s M(R\mathcal{L}\otimes I_n)-\gamma (\mathcal{L}^TR\otimes I_n)M^T T_s^T \mathcal{P})e
\end{equation}
with $T_s$, $M$ and $N$ the block diagonal versions of the $T_{is}$, $M_i$ and $N_i$  as defined by \eqref{Ts}, \eqref{M} and \eqref{N}.
By substituting $M_i:=\begin{bmatrix}
\mathcal{P}_{ie}^{-1} &0\\
0 &\mathcal{P}_{iu}^{-1}
\end{bmatrix}T_{is}^T$ into (\ref{Ve}), the time-derivative of $V$ becomes
\begin{equation} \label{derivative}
\dot{V}(e)=e^T\Lambda e,
\end{equation}
where we have defined 
\[
\Lambda := \mathcal{P}T_s N T_s^T+T_sN^TT_s^T \mathcal{P}-\gamma T_s T_s^T(R\mathcal{L}\otimes I_n)-\gamma (\mathcal{L}^TR\otimes I_n)T_s^T T_s.
\]
On the other hand, by combining  (\ref{Th1}) with (\ref{L1}) in Lemma \ref{key} it can be verified that 
\begin{equation}\label{Th1q}
\mathrm{diag}\{\mathcal{Q}_1,\cdots,\mathcal{Q}_N\}-T_s^T\gamma((R\mathcal{L}+\mathcal{L}^TR)\otimes I_n)T_s<0,
\end{equation}
where $\mathcal{Q}_i :=\begin{bmatrix}
\Phi _i &A_{i32}^TP_{iu}\\
\mathcal{P}_{iu}A_{i32} & \mathcal{P}_{iu}A_{iu}+A_{iu}^T\mathcal{P}_{iu}+2\alpha \mathcal{P}_{iu}
\end{bmatrix}$, $i\in \mathcal{N}$, and $\Phi _i$ as defined in the statement of the theorem.

Recall that we have defined $H_{i} :=\mathcal{P}_{ie}^{-1}W_i$. Hence $W_i = \mathcal{P}_{ie}H_i$. By substituting this into the expression for $\Phi_i$, we can check that
\[
\mathcal{Q}_i = T_{is}^T \mathcal{P}_i T_{is} N_i + N_i^T T_{is}^T \mathcal{P}_i T_{is} + 2 \alpha T_{is}^T\mathcal{P}_i T_{is}
\]
Substituting this into the inequality \eqref{Th1q}, using that $T_s^T T_s$ is the identity matrix, we get
\begin{equation*}
T_{s}^T(\mathcal{P} T_s N T_s^T+T_s N^T T_s^T \mathcal{P}-\gamma T_sT_s^T(R\mathcal{L}\otimes I_n)-\gamma (\mathcal{L}^TR\otimes I_n)T_sT_s^T+2\alpha \mathcal{P})T_{s}<0,
\end{equation*}
so, in other words, 
\begin{equation}   \label{eqv}
T_s^T (\Lambda + 2 \alpha \mathcal{P}) T_s < 0.
\end{equation}
By combining \eqref{derivative} and \eqref{eqv} we will now show that all global error trajectories converge to zero with convergence rate at least $\alpha$. Indeed let $e$ be such trajectory. By Lemma \ref{invariance} we have that $e$ can be represented as  $e= T_s z $ for some function $z$. Thus we get
\begin{equation*}
\begin{array}{ll}
\dot{V}(e)+ 2 \alpha V(e) & =  e^T \Lambda e + 2 \alpha e^T\mathcal{P} e \\
 & =z^T T_s^T( \Lambda + 2 \alpha \mathcal{P}) T_s z,
 \end{array}
\end{equation*}
and therefore $\dot{V}(e)(t)+ 2 \alpha V(e)(t) < 0$  whenever $e(t) \neq 0$.
Hence the distributed observer \eqref{obsl} achieves omniscience asymptotically and all solutions of the global error system converge to zero asymptotically with convergence rate at least $\alpha$.

\end{proof}

Using the previous lemmas and theorem, we are now able to formulate and prove our main result:
\begin{theorem} \label{th2}
Assume that $(C,A)$ is observable and that $\mathcal{G}$ is a strongly connected directed graph. Let $\alpha \geq 0$. Then there exists a distributed observer (\ref{obsl}) that achieves omniscience asymptotically while all solutions of the error system converge to zero with convergence rate at least $\alpha$. This distributed observer has state space dimension equal to $Nn - \sum_{i =1}^N p_i$ with $p_i = \mathrm{rank}~C_i$. Such observer is obtained as follows:
\begin{itemize}
\item[1] For each $i\in \mathcal{N}$, make a full rank factorization $C_i=D_iF_i$  where $D_i \in \mathbb{R}^{m_i\times p_i}$ and $F_i \in \mathbb{R}^{p_i\times n}$ have full column rank and row rank, respectively.
\item[2] For each $i\in \mathcal{N}$, choose an orthogonal matrix $T_i$ such that
\begin{equation} \label{dec}
T_i^TAT_i=\begin{bmatrix}
A_{i11} &A_{i12} &0\\
A_{i21} &A_{i22} &0\\
A_{i31} &A_{i32} &A_{iu}\\
\end{bmatrix},
~F_iT_i=\begin{bmatrix}
E_i &0 &0
\end{bmatrix}
\end{equation}
with the pair $(\begin{bmatrix} E_i & 0 \end{bmatrix}, \begin{bmatrix}
A_{i11} &A_{i12}\\
A_{i21} &A_{i22}
\end{bmatrix})$
observable and $E_i$ non-singular. Then $(E_iA_{i12},A_{i22})$ is also observable.
\item[3] Compute the positive row vector $r=\begin{bmatrix}
r_1,\cdots,r_N
\end{bmatrix} $ such that $r\mathcal{L}=0$ and $r\mathbf{1}_N=N$.
\item[4] Put $g_i=1$, $i\in \mathcal{N}$ and take $\epsilon >0$ such that (\ref{L1}) holds.
\item[5] Take $\gamma>0$ sufficiently large so that for all $i\in \mathcal{N}$ 
\begin{equation}\label{Thg1}
A_{iu}^T+A_{iu}-(\gamma \epsilon-2\alpha) I_{n-v_i}+\frac{1}{\gamma \epsilon-2\alpha}A_{i32}A_{i32}^T<0.
\end{equation}
\item[6] Choose $H_{i}$ such that all eigenvalues of $A_{i22}-H_{i}E_iA_{i12}$ lie in the region $\{s\in \mathbb{C}~ |~ \mathrm{Re}(s)<-\alpha\}$. 
\item[7] For all $i\in \mathcal{N}$, solve the Lyapunov equation 
\begin{equation}\label{Th2}
(A_{i22}-H_{i}E_iA_{i12}+\alpha I _{v_i-p_i})^T \mathcal{P}_{ie} + \mathcal{P}_{ie}(A_{i22}-H_{i}E_iA_{i12}+\alpha I _{v_i-p_i}) +(\gamma-2\alpha)  I_{v_i-p_i}=0
\end{equation}
to obtain $P_{ie}>0$.
\item[8] Define  
\begin{equation} 
K_i:=\begin{bmatrix}
E_i^{-1}\\
H_i\\
0
\end{bmatrix}D_i^{\dag} ,~S_i:=\begin{bmatrix}
0\\
I_{n-p_i}
\end{bmatrix}, T_{is} := T_i S_i,
\end{equation}
\begin{equation}
L_i:=\begin{bmatrix}
A_{i21}-H_iE_iA_{i11}\\
A_{i31}
\end{bmatrix}E_i^{-1}D_i^{\dag}+N_iS_i^T K_i,
\end{equation}
\begin{equation}
M_i:=\begin{bmatrix}
\mathcal{P}_{ie}^{-1} &0\\
0 & I_{n - v_i}
\end{bmatrix}T_{is}^T, ~N_i:=\begin{bmatrix}
A_{i22}-H_iE_iA_{i12} &0\\
A_{i32} &A_{iu}
\end{bmatrix},
\end{equation}
\begin{equation} 
P_i := T_{is}, ~ Q_i := T_i K_i.
\end{equation}
\end{itemize}
\end{theorem}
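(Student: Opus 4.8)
The plan is to reduce the statement entirely to Theorem~\ref{LMIresult}. I would show that Steps 1--8 are all executable and that the gain matrices they produce coincide with those furnished by Theorem~\ref{LMIresult} for one particular admissible choice of the free parameters $\mathcal{P}_{ie},\mathcal{P}_{iu},W_i$, and that for this choice the LMI \eqref{Th1} holds. Theorem~\ref{LMIresult} then delivers a distributed observer \eqref{obsl} with the required convergence rate, and the dimension assertion is automatic, since the local observer at node $i$ carries the state $z_i\in\mathbb{R}^{n-p_i}$, so the distributed observer has state-space dimension $\sum_{i=1}^N(n-p_i)=Nn-\sum_{i=1}^N p_i$.

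First I would check that each step is well posed. Steps 1--3 are routine: a full rank factorization always exists, the orthogonal observability decomposition \eqref{trf}--\eqref{trfc} always exists, and Step~3 is Lemma~\ref{mirror}; recall from the Design section that this decomposition automatically makes $(E_iA_{i12},A_{i22})$ observable, which is exactly what Step~6 needs. Step~4 is Lemma~\ref{key} applied with $g_i=1$. For Step~5, with $\epsilon$ fixed the term $-(\gamma\epsilon-2\alpha)I_{n-v_i}$ dominates as $\gamma\to\infty$ (while $\frac{1}{\gamma\epsilon-2\alpha}A_{i32}A_{i32}^T\to0$), so \eqref{Thg1} holds once $\gamma$ is large enough; I would take $\gamma$ large enough that in addition $\gamma>2\alpha$ and $\gamma\epsilon>2\alpha$. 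Step~6 is pole placement for the observable pair $(E_iA_{i12},A_{i22})$. Finally, in Step~7 the matrix $A_{i22}-H_iE_iA_{i12}+\alpha I_{v_i-p_i}$ is Hurwitz because $H_i$ was chosen so that the spectrum of $A_{i22}-H_iE_iA_{i12}$ lies strictly to the left of $-\alpha$, and $(\gamma-2\alpha)I_{v_i-p_i}>0$, so the Lyapunov equation \eqref{Th2} has a unique solution $\mathcal{P}_{ie}>0$.

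Next I would instantiate Theorem~\ref{LMIresult} with $g_i=1$, $\mathcal{P}_{iu}:=I_{n-v_i}$, $W_i:=\mathcal{P}_{ie}H_i$ (so $H_i=\mathcal{P}_{ie}^{-1}W_i$), and the $\gamma,\epsilon$ fixed above; a direct check shows that with these choices the gain matrices \eqref{tlm}, \eqref{tlm1} and \eqref{tlm11} are exactly those written in Step~8. The key computation is the simplification of the $(1,1)$ block of \eqref{Th1}: substituting $W_i=\mathcal{P}_{ie}H_i$ turns $\Phi_i$ into $\mathcal{P}_{ie}(A_{i22}-H_iE_iA_{i12}+\alpha I_{v_i-p_i})+(A_{i22}-H_iE_iA_{i12}+\alpha I_{v_i-p_i})^T\mathcal{P}_{ie}$, which by \eqref{Th2} equals $-(\gamma-2\alpha)I_{v_i-p_i}$, whence $\Phi_i+\gamma g_iI_{v_i-p_i}=2\alpha I_{v_i-p_i}$. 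Using also $\mathcal{P}_{iu}=I_{n-v_i}$, the left-hand side of \eqref{Th1} becomes
\[
\begin{bmatrix}
(2\alpha-\gamma\epsilon)I_{v_i-p_i} & A_{i32}^T\\
A_{i32} & A_{iu}^T+A_{iu}+(2\alpha-\gamma\epsilon)I_{n-v_i}
\end{bmatrix}.
\]
Since $\gamma\epsilon>2\alpha$, the $(1,1)$ block is negative definite, so by the Schur complement lemma this block matrix is negative definite if and only if
\[
A_{iu}^T+A_{iu}-(\gamma\epsilon-2\alpha)I_{n-v_i}+\frac{1}{\gamma\epsilon-2\alpha}A_{i32}A_{i32}^T<0,
\]
which is precisely \eqref{Thg1} and holds by Step~5. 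Hence \eqref{Th1} holds for every $i\in\mathcal{N}$, and Theorem~\ref{LMIresult} completes the proof.

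The argument is essentially bookkeeping once Theorem~\ref{LMIresult} is available. The step I expect to need the most care is the joint choice of constants: one must exhibit a single $\gamma$ satisfying \eqref{Thg1}, $\gamma>2\alpha$ (needed for $\mathcal{P}_{ie}>0$ in Step~7), and $\gamma\epsilon>2\alpha$ (needed for the Schur complement reduction of \eqref{Th1} to be valid) --- this is why Step~5 demands $\gamma$ ``sufficiently large'' --- and one must perform the substitution $W_i=\mathcal{P}_{ie}H_i$ together with \eqref{Th2} carefully so that the $\mathcal{P}_{ie}$-dependence of the $(1,1)$ block collapses to the constant $2\alpha I_{v_i-p_i}$. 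Everything else is direct substitution into results already established.
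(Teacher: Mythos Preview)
Your proposal is correct and follows essentially the same route as the paper: set $g_i=1$, $\mathcal{P}_{iu}=I_{n-v_i}$, $W_i=\mathcal{P}_{ie}H_i$, use the Lyapunov equation \eqref{Th2} to collapse $\Phi_i$, and then reduce the LMI \eqref{Th1} via the Schur complement to exactly \eqref{Thg1}, which holds for $\gamma$ large. Your write-up is in fact more careful than the paper's own proof in making explicit the side constraints $\gamma>2\alpha$ and $\gamma\epsilon>2\alpha$ and in verifying that each step is executable.
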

\begin{proof}
We choose $g_i=1$, $i\in \mathcal{N}$. Since the pair $(C,A)$ is observable and the graph $\mathcal{G}$ is a strongly connected directed graph, $\epsilon>0$ can be obtained by Lemma 5. 

Putting $\mathcal{P}_{iu}=I_{n-v_i}$, $i\in \mathcal{N}$, the inequality (\ref{Th1}) in Theorem \ref{LMIresult} becomes
\begin{equation}\label{Thg}
\begin{bmatrix}
\Phi _i+ \gamma  I_{v_i-pi} &A_{i32}^T\\
A_{i32} &A_{iu}^T+A_{iu}+2\alpha I_{n-v_i}
\end{bmatrix}-\gamma \epsilon I_n<0,~ \forall i\in \mathcal{N}.
\end{equation}
where $\Phi _i:= \mathcal{P}_{ie}A_{i22}+A_{i22}^T\mathcal{P}_{ie}-W_iE_iA_{i12}-A_{i12}^TE^TW_i^T+2\alpha \mathcal{P}_{ie}$.
By substituting (\ref{Th2}) and $W_i=\mathcal{P}_{ie}H_i$ into (\ref{Thg}), we have that the inequality (\ref{Thg}) holds if
\begin{equation}\label{Th3}
\begin{bmatrix}
-(\gamma \epsilon-2\alpha) I_{v_i-p_i} &A_{i32}^T\\
A_{i32} & A_{iu}^T + A_{iu}-(\gamma \epsilon -2\alpha) I_{n-v_i}
\end{bmatrix}<0,\forall i\in \mathcal{N}.
\end{equation}
By using the Schur complement lemma, (\ref{Th3}) is equivalent with
\begin{equation} \label{Th4}
A_{iu}+A_{iu}^T-(\gamma \epsilon -2\alpha) I_{n-v_i}+\frac{1}{\gamma \epsilon -2\alpha}A_{i32}A_{i32}^T<0,~ \forall i\in \mathcal{N}.
\end{equation}
As stated in step 5, inequality (\ref{Th4}) can be made to hold with sufficiently large $\gamma >0$.

Thus, we find that the parameters introduced in steps 4 to 7 guarantee that the inequality (\ref{Th1}) in Theorem \ref{LMIresult} holds. Hence, the distributed observer (\ref{obsl}) with gain matrices $N_i$, $L_i$, $M_i$, $P_i$  and $Q_i$ achieves omniscience asymptotically with convergence rate at least $\alpha$.

\end{proof}

\begin{remark}
\rm{For any given $\alpha \geq 0$, the coupling gain $\gamma >0$ can indeed be taken sufficiently large to guarantee that (\ref{Thg1}) holds. Since $(E_iA_{i12},A_{i22})$ is observable, for any $\alpha \geq 0$ the Lyapunov equation (\ref{Th2}) in step 7 can be made to have a positive definite solution by choosing the matrix $H_i$ as in step 6.}
\end{remark}

\begin{remark}
\rm{The design procedure in Theorem 7 gives one possible choice of solutions of the inequality (\ref{Th1}) in Theorem \ref{LMIresult}, which also means that under our standing assumptions that $(C,A)$ is observable and the graph $\mathcal{G}$ is strongly connected the inequality (\ref{Th1}) always has the required solutions. In fact, the inequalities (\ref{L1}) in Lemma \ref{key} and (\ref{Th1}) in Theorem \ref{LMIresult} are both LMI's, which can be solved numerically by using the LMI Toolbox or YALMIP in MATLAB directly.}
\end{remark}


\begin{remark}
\rm{In the special case that $C$ has full row rank $m$, all local output matrices $C_i$ have full row rank $m_i$ as well, so $p_i = m_i$ for all $i \in \mathcal{N}$. In this case our distributed observer has order $Nn - m$. In this case step 1 of our design procedure can be skipped since $F_i = C_i$ and $D_i = I_{m_i}$.}
\end{remark}

\begin{remark}
\rm {Another special case occurs if for some $i$ we have $v_i = p_i$, which means that $\mathrm{ker}~C_i$ coincides with the unobservable subspace of $(C_i,A)$. In this case, in the decomposition \eqref{dec} the second block column and row are void, so in particular $A_{i12}$, $A_{i22}$, $A_{i32}$ and $A_{i21}$ do not appear. Step 5 then reduces to  $A_{iu}+A_{iu}^T-(\gamma \epsilon-2\alpha) I_{n-p_i} < 0$, and steps 6 and 7 can be skipped. The local observer \eqref{obsl} at node $i$ is then given by
\begin{equation}\label{tlm3}
N_i:=A_{iu},~L_i:=A_{i31}E_i^{-1}D_i^{\dag},~M_i = T_{is}^T,~ P_i = T_{is},~ Q_i = T_i K_i,~  K_i:=\begin{bmatrix}
E_i^{-1}\\
0
\end{bmatrix}D_i^{\dag}.
\end{equation} }
\end{remark}

\section{Conclusions}
In this paper we have studied the problem of reduced order distributed observer design. We have shown that if the observed plant is observable and the network graph is strongly connected, then a distributed observer achieving omniscience exists with state space dimension equal to $Nn - \sum_{i = 1}^N p_i$, where $N$ is the number of network nodes, $n$ is the dimension of the plant state space 
and $p_i$ is the rank of the output matrix corresponding to the output received by node $i$. In fact, for any desired rate of error convergence a distributed observer of this order exists. As an intermediate result we have cast the distributed observer design problem in terms of feasiblity of LMI's, which is advantageous from a computational point of view. Under our standing assumptions these LMI's are always solvable. 

Whereas in the case of a single observer our reduced order is known to be the minimal state space dimension for a stable observer, it remains an open problem to determine the minimal order over all distributed observers with a given network graph. This is a left as a problem for future research.


%

\appendices
%



\ifCLASSOPTIONcaptionsoff
  \newpage
\fi



\bibliographystyle{IEEEtran}
%

%









\end{document}